\newcommand\be{\begin{equation}}
\newcommand\ee{\end{equation}}
\newcommand\bea{\begin{eqnarray}}
\newcommand\eea{\end{eqnarray}}
\newcommand\bi{\begin{itemize}}
\newcommand\ei{\end{itemize}}
\newcommand\ben{\begin{enumerate}}
\newcommand\een{\end{enumerate}}
\newtheorem{thm}{Theorem}[section]
\newtheorem{lem}[thm]{Lemma}
\newtheorem{rek}[thm]{Remark}
\newcommand{\Z}{\ensuremath{\mathbb{Z}}}
\newcommand{\E}{\mathbb{E}}
\newcommand{\var}[1]{\operatorname{Var}\!\left( #1 \right)}
\newcommand{\cov}[1]{\operatorname{Cov}\!\left( #1 \right)}
\renewcommand{\prob}[1]{\operatorname{Prob}\left( #1 \right)}
\numberwithin{equation}{section}
\begin{document}

\title{When almost all sets are difference dominated in $\Z/n\Z$}

\author{Anand Hemmady}
\email{\textcolor{blue}{\href{mailto:ash6@williams.edu}{ash6@williams.edu}}} 
\address{Department of
  Mathematics and Statistics, Williams College, Williamstown, MA 01267}

\author{Adam Lott}
\email{\textcolor{blue}{\href{mailto:alott@u.rochester.edu}{alott@u.rochester.edu}}} 
\address{Department of Mathematics, University of Rochester, Rochester, NY 14627}

\author{Steven J. Miller}
\email{\textcolor{blue}{\href{mailto:sjm1@williams.edu}{sjm1@williams.edu}}, \textcolor{blue}{\href{mailto:Steven.Miller.Mc.96@aya.yale.edu}{Steven.Miller.Mc.96@aya.yale.edu}}}
\address{Department of
  Mathematics and Statistics, Williams College, Williamstown, MA 01267}


\subjclass[2010]{11B13, 11P99 (primary), 05B10, 11K99 TBD (secondary)}

\keywords{More sums than differences, Strong concentration, Thresholds}

\date{\today}

\thanks{This work was supported by NSF Grants DMS1265673, DMS1561945, and DMS1347804, Williams College, and the Williams College Finnerty Fund.  We also thank Oleg Lazarev and Kevin O'Bryant \cite{LazarevMillerOBryant} for the TikZ code used to generate the images in this work.}

\begin{abstract}
We investigate the behavior of the sum and difference sets of $A \subseteq \Z/n\Z$ chosen independently and randomly according to a binomial parameter $p(n) = o(1)$.  We show that for rapidly decaying $p(n)$, $A$ is almost surely difference-dominated as $n \to \infty$, but for slowly decaying $p(n)$, $A$ is almost surely balanced as $n \to \infty$, with a continuous phase transition as $p(n)$ crosses a critical threshold.  

Specifically, we show that if $p(n) = o(n^{-1/2})$, then $|A-A|/|A+A|$ converges to $2$ almost surely as $n \to \infty$ and if $p(n) = c \cdot n^{-1/2}$, then $|A-A|/|A+A|$ converges to $1+\exp(-c^2/2)$ almost surely as $n \to \infty$.  In these cases, we modify the arguments of Hegarty and Miller on subsets of $\Z$ to prove our results.  When $\sqrt{ \log n} \cdot n^{-1/2} = o(p(n))$, we prove that $|A-A| = |A+A| = n$ almost surely as $n \to \infty$ if some additional restrictions are placed on $n$.  In this case, the behavior is drastically different from that of subsets of $\Z$ and new technical issues arise, so a novel approach is needed.  When $n^{-1/2} = o(p(n))$ and $p(n) = O(\sqrt{ \log n} \cdot n^{-1/2})$, the behavior of $|A+A|$ and $|A-A|$ is markedly different and suggests an avenue for further study.

These results establish a ``correspondence principle'' with the existing results of Hegarty, Miller, and Vissuet.  As $p(n)$ decays more rapidly, the behavior of subsets of $\Z/n\Z$ approaches the behavior of subsets of $\Z$ shown by Hegarty and Miller.  Moreover, as $p(n)$ decays more slowly, the behavior of subsets of $\Z/n\Z$ approaches the behavior shown by Miller and Vissuet in the case where $p(n) = 1/2$.

\end{abstract}

\maketitle

\tableofcontents

\section{Introduction}

A central object of study in additive combinatorics is the sumset of a set.  Given an abelian group $G$ (written additively) and a set $A \subseteq G$, we define its sumset $A+A := \{ a+b : a,b \in A \}$.  Similarly, we can define its difference set $A-A := \{ a-b : a,b \in A \}$.  If $|A+A| > |A-A|$, we say $A$ is \emph{sum-dominated} or a \emph{More Sums Than Differences (MSTD) set}.  If $|A-A| > |A+A|$, we say $A$ is \emph{difference-dominated}, and if $|A+A| = |A-A|$ we say $A$ is \emph{balanced}.  The most common setting for studying MSTD sets is subsets of $\Z$ (though they have been studied elsewhere as well; see, for example, \cite{MillerVissuet} and \cite{Polytopes}).  Since addition in $\Z$ is commutative but subtraction is not, we typically expect most sets to be difference-dominated.  As Nathanson \cite{Nathanson1} famously remarked,
\begin{displayquote}
``Even though there exist sets $A$ which have more sums than differences, such sets should be rare, and it must be true with the right way of counting that the vast majority of sets satisfies $|A-A| > |A+A|$.''
\end{displayquote}
Surprisingly, Martin and O'Bryant \cite{MartinOBryant} showed that a positive proportion of subsets of $\{0, \hdots, n-1 \} \subset \Z$ are sum-dominated in the limit as $n \to \infty$.  Zhao \cite{Zhao1} has shown that this proportion is around $4.5 \times 10^{-4}$.  

Martin and O'Bryant proved their result by picking sets $A \subseteq \{0, \hdots, n-1\} \subset \Z$ randomly according to a binomial parameter $p = 1/2$ (i.e., every subset is equally likely) and showing that the probability of being sum-dominated is nonzero as $n \to \infty$.  This happens because if $A$ is large enough, almost all possible sums and differences appear, so it is possible to choose $A$ carefully to be sum-dominated.  However, Hegarty and Miller \cite{HegartyMiller} showed that if $A \subseteq \{0, \hdots, n-1 \} \subset \Z$ is instead picked randomly according to a binomial parameter $p(n) = o(1)$, then the probability of being sum-dominated tends to 0 as $n \to \infty$. In some sense, this is Nathanson's ``right way of counting'' because it prevents $A$ from being too large.

In this paper, we examine subsets of $\Z/n\Z$.  Miller and Vissuet \cite{MillerVissuet} showed that if subsets of $\Z/n\Z$ are picked uniformly at random, then they are balanced with probability $1$ as $n \to \infty$.  In the style of \cite{HegartyMiller}, we instead pick subsets randomly according to a binomial parameter $p(n) = o(1)$.  Our main result is the following.

\begin{thm} \label{thm: MainResult}
Let $A \subseteq \Z/n\Z$ be a subset chosen randomly according to a binomial parameter $p(n)=o(1)$.  Let $S$, $D$ denote the random variables $|A+A|$, $|A-A|$ respectively.  We have three cases.
\begin{enumerate}
\item \label{thmpart: FastDecay} Fast decay: \\
If $p(n) \ = \ o(n^{-1/2})$, then 
\begin{enumerate}
\item 
$S \ \sim \ \frac12 (n \cdot p(n))^2$,
\item 
$D \ \sim \ (n \cdot p(n))^2$.
\end{enumerate}

\item \label{thmpart: CriticalDecay} Critical decay: \\
If $p(n) \ = \ cn^{-1/2}$, then 
\begin{enumerate}
\item $S \ \sim \ n(1 - \exp( -c^2/2))$,
\item $D \ \sim \ n(1 - \exp(1 - c^2))$.
\end{enumerate}

\item \label{thmpart: SlowDecay} Slow decay: \\
If $\sqrt{ \log n} \cdot n^{-1/2} = o(p(n))$ and $n$ is prime, then
\begin{enumerate}
\item $S \ \sim \ n$, \label{thmpart: sumslow}
\item $D \ \sim \ n$. \label{thmpart: diffslow}
\end{enumerate}
\end{enumerate}
\end{thm}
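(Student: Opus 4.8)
The plan is to treat all three regimes through a single mechanism: write $S = \sum_{s \in \Z/n\Z} \mathbf{1}[s \in A+A]$ and $D = \sum_{s \in \Z/n\Z} \mathbf{1}[s \in A-A]$, so that $\E[S] = \sum_s (1 - q^+_s)$ and $\E[D] = \sum_s (1 - q^-_s)$, where $q^+_s := \prob{s \notin A+A}$ and $q^-_s := \prob{s \notin A-A}$. The whole theorem then rests on three steps: (i) evaluating these missing-element probabilities exactly, (ii) taking their asymptotics and summing them in each regime, and (iii) upgrading the resulting first-moment estimates to almost-sure statements via concentration.

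First I would compute the two engines. For the sumset, the involution $a \mapsto s - a$ on $\Z/n\Z$ has (for odd $n$) the single fixed point $a_0 = s/2$ and $(n-1)/2$ two-element orbits; since $s \in A+A$ exactly when $a_0 \in A$ or some orbit lies entirely in $A$, one gets
\[ q^+_s \ = \ (1-p)\,(1-p^2)^{(n-1)/2}. \]
For the difference set, translation by $s$ partitions $\Z/n\Z$ into $\gcd(s,n)$ cycles of length $n/\gcd(s,n)$, and $s \notin A-A$ is precisely the event that $A$ meets each cycle in an independent set of the cycle graph. A transfer-matrix computation evaluates this as a product of factors $\operatorname{tr}(T^{\ell})$, one per cycle, with $T = \bigl(\begin{smallmatrix} 1-p & p \\ 1-p & 0\end{smallmatrix}\bigr)$ whose dominant eigenvalue is $\lambda_+ = 1 - p^2 + O(p^3)$; when $n$ is prime every $s \neq 0$ gives one $n$-cycle, so $q^-_s = \lambda_+^n + \lambda_-^n \approx (1-p^2)^n$ (and $0 \in A-A$ always, provided $A \neq \emptyset$).

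With the engines in hand the three regimes become asymptotics of the same expressions. In \textbf{fast decay} ($np^2 \to 0$) it is cleanest to count representations directly: $S$ lies between $\binom{N+1}{2}$ and $\binom{N+1}{2}$ minus the number of additive collisions (distinct unordered pairs with equal sum), where $N = |A|$; since $\E[\text{collisions}] \lesssim n^3 p^4 = (np)^2 \cdot (np^2) = o((np)^2)$ and $N \sim np$, one obtains $S \sim \tfrac12 (np)^2$, and the corresponding off-diagonal count for differences gives $D \sim (np)^2$. In \textbf{critical decay} ($p = cn^{-1/2}$) one has $(1-p^2)^{(n-1)/2} \to e^{-c^2/2}$ and $(1-p^2)^n \to e^{-c^2}$, whence $\E[S] \sim n(1 - e^{-c^2/2})$ and $\E[D] \sim n(1 - e^{-c^2})$; note this yields the ratio $D/S \to 1 + e^{-c^2/2}$ recorded in the abstract. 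In \textbf{slow decay} ($p^2 n/\log n \to \infty$, $n$ prime) both $q^+_s$ and $q^-_s$ are $\le e^{-\Omega(p^2 n)} = o(n^{-A})$ for every $A$, uniformly in $s$, so a union bound over the $n$ targets gives $\prob{A \pm A \neq \Z/n\Z} \to 0$; hence $A+A = A-A = \Z/n\Z$ with high probability and $S, D \sim n$. Primality is exactly what guarantees the single-$n$-cycle structure and thus the uniform bound on $q^-_s$, ruling out the short cycles (e.g. $s = n/2$ when $n$ is even) that would otherwise inflate a missing-element probability.

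The main obstacle is the almost-sure upgrade in the critical regime. In fast decay convergence in probability is immediate from the sandwich, and the almost-sure statement needs only a higher-moment (or martingale) tail bound on the collision count to make the Borel--Cantelli series summable; this is where the concentration estimates of Hegarty and Miller are imported. The critical regime is genuinely harder because $S$ and $D$ are each a sum of $n$ globally dependent indicators of size $\Theta(n)$, and to run the second-moment method I must bound $\var{S}$ and $\var{D}$ by $o(\E[S]^2)$ and $o(\E[D]^2)$. This reduces to controlling $\sum_{s \neq t}\cov{\mathbf{1}[s \in A+A],\,\mathbf{1}[t \in A+A]}$, where the correlation between two targets $s,t$ comes entirely from representations whose underlying pairs overlap in $A$. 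Organizing this overlap structure and showing the aggregate covariance is lower order — most delicate precisely in the critical window, where each $q^\pm_s$ is bounded away from both $0$ and $1$ — is the crux of the argument; once the variance bound is secured, Chebyshev gives convergence in probability and a Borel--Cantelli argument (via a strong-concentration inequality to control the gaps between the $n$ along which one samples) promotes it to the stated almost-sure convergence.
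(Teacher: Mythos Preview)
Your approach differs from the paper's in a way worth spelling out.  The paper does \emph{not} work directly with the indicators $\mathbf{1}[s\in A+A]$ in the fast and critical regimes; instead it introduces the auxiliary variables
\[
X_k \ = \ \#\bigl\{\{\{a_1,a_2\},\ldots,\{a_{2k-1},a_{2k}\}\}:a_i\in A,\ a_1+a_2=\cdots=a_{2k-1}+a_{2k}\bigr\}
\]
(and $Y_k$ analogously for differences), proves $X_k\sim n^{k+1}(p^2/2)^k/k!$ with concentration for each fixed $k$, and recovers $S$ via the inclusion--exclusion identity $S=\sum_{k\ge 1}(-1)^{k+1}X_k$.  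In the critical window this sum becomes $n\sum_k(-1)^{k+1}(c^2/2)^k/k! = n(1-e^{-c^2/2})$.  The point is that concentration of each $X_k$ is a straightforward second-moment computation on $k$-tuples, whereas your plan --- bound $\operatorname{Var}(S)$ by controlling $\sum_{s\neq t}\operatorname{Cov}(\mathbf{1}[s\in A+A],\mathbf{1}[t\in A+A])$ --- is exactly the step you yourself flag as ``the crux'' and leave unexecuted.  That is a real gap in your proposal for part~(2): you have the first moment but no concrete mechanism for the variance, and the paper's $X_k$ machinery is precisely what supplies one.

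For part~(3) your union-bound argument is correct and in fact cleaner than what the paper does: since $q_s^{\pm}\le e^{-\Omega(np^2)}=o(n^{-A})$ uniformly in $s$, Markov on $S^c=\sum_s \mathbf{1}[s\notin A+A]$ already gives $\prob{S^c\ge 1}\le \E[S^c]\to 0$, and the paper's additional variance computation (via the graphs $G^S_{n,i,j}$ and the bound on $F(n)$) is not strictly needed.  Two smaller remarks: the paper's ``$\sim$'' means convergence in probability, so your Borel--Cantelli layer aiming at almost-sure convergence is unnecessary; and your transfer-matrix derivation of $q_s^-$ is a nice alternative to the paper's independent-set count on the $n$-cycle, arriving at the same $(1-p^2)^n$ asymptotic.
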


\begin{rek}
Throughout, we will point out instances where the case $n^{-1/2} = o(p(n))$ and $p(n) = O( \sqrt{\log n} \cdot n^{-1/2} )$ causes deviant behavior.
\end{rek}

\begin{rek}
In part \ref{thmpart: SlowDecay} we assume that $n$ is prime to simplify the technical details of our analysis; however, numerical simulations suggest that the behavior is the same for any $n$.  
\end{rek}

For parts (\ref{thmpart: FastDecay}) and (\ref{thmpart: CriticalDecay}) of Theorem \ref{thm: MainResult}, we modify the arguments in \cite{HegartyMiller} to work in this new environment where sums and differences are considered modulo $n$; however, for part \ref{thmpart: SlowDecay}, these methods do not work and a new approach is needed.

We first fix some notation.
\begin{itemize}
\item If $X$ is a random variable depending on $n$, we write $X \sim f(n)$ if for every $\epsilon > 0$, $\operatorname{Prob}((1-\epsilon)f(n) < X < (1+\epsilon)f(n)) \to 1$ as $n \to \infty$. 

\item If $X$ and $Y$ are two quantities depending on $n$, we also write $X \sim Y$ if $\lim_{n \to \infty} X/Y = 1$.  This abuse of notation should not cause any confusion as it will be clear from context if we are talking about a random variable or not.

\item We say $f(n) = O(g(n))$ if $\limsup_{n \to \infty} f(n)/g(n) < \infty$, and we say $f(n) = o(g(n))$ if $\lim_{n \to \infty} f(n)/g(n) = 0$.

\item To reduce clutter, we write $p$ in place of $p(n)$ and the dependence on $n$ is implied.
\end{itemize}

\section{Proof of main result: fast and critical decay cases}

To prove parts \ref{thmpart: FastDecay} and \ref{thmpart: CriticalDecay} of Theorem \ref{thm: MainResult}, we show that the expected value of each random variable is as claimed, and then show that the variable is strongly concentrated about its mean.

We use the following construction from \cite{HegartyMiller}.  Let
\begin{align}
X_k \ &= \ \# \{ \{ \{a_1,a_2 \}, \hdots, \{a_{2k-1},a_{2k}\} \}: a_i \in A,\ a_1+a_2 = \hdots = a_{2k-1}+a_{2k} \} \text{ and } \\
Y_k \ &= \ \# \{ \{ (a_1,a_2), \hdots, (a_{2k-1},a_{2k}) \} : a_i \in A,\ a_1-a_2 = \hdots = a_{2k-1}-a_{2k} \}.
\end{align}
In words, $X_k$ denotes the number of times $k$ pairs of elements from $A$ all have the same sum, and $Y_k$ denotes the number of times $k$ pairs of elements from $A$ all have the same difference. It is important to note that $X_k$ consists of \emph{unordered} pairs of elements, while $Y_k$ consists of \emph{ordered} pairs.  Since $A$ is a randomly chosen set, $X_k$ and $Y_k$ are random variables.  The idea is that $X_k$ and $Y_k$ measure the number of repeated sums and differences, so if we can control these quantities, we can control $|A+A|$ and $|A-A|$.  We have the following lemma.

\begin{lem} \label{lem: RepeatedSumsDiffs}
If $p(n) = O(n^{-1/2})$, then
\begin{enumerate}[label=(\alph*)]
\item \label{lempart: a}
$X_k \ \sim \ \frac{n^{k+1}}{k!} \left( \frac{p^2}{2} \right)^k$, \text{ and }
\item \label{lempart: b}
$Y_k \ \sim \ \frac{n^{k+1}}{k!} (p^2)^k$.
\end{enumerate}
\end{lem}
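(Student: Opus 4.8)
The plan is to prove both parts by the first and second moment methods: first compute $\E[X_k]$ and $\E[Y_k]$ and show they match the claimed expressions, then bound the variances and invoke Chebyshev's inequality to upgrade these expectation estimates to the strong-concentration statement encoded in the notation ``$\sim$'' for random variables.

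For the first moment I would write each variable as a sum of indicators over \emph{configurations}. For $Y_k$ a configuration is a choice of a common difference $d \in \Z/n\Z$ together with a set of $k$ distinct ordered pairs $(a_i,b_i)$ with $a_i - b_i = d$, and its indicator equals $1$ exactly when every group element appearing in the configuration lies in $A$. By linearity, $\E[Y_k] = \sum_{\text{configs}} p^{(\#\text{ distinct elements})}$. The dominant contribution comes from \emph{generic} configurations in which the $k$ pairs are pairwise disjoint, so that $2k$ distinct elements appear: there are $n$ choices of $d$, and for each $d$ the number of pairwise-disjoint $k$-subsets among the $n$ ordered pairs of difference $d$ is $\binom{n}{k}(1 + O(1/n)) \sim n^k/k!$, each contributing $p^{2k}$. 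This yields the main term $\frac{n^{k+1}}{k!}(p^2)^k$. For $X_k$ the only change is that the pairs are unordered and the relevant relation is the sum, so each sum value $s$ admits $\sim n/2$ unordered pairs; choosing $k$ of them disjointly costs $\binom{n/2}{k} \sim n^k/(2^k k!)$, producing the extra factor $(1/2)^k$ and hence $\frac{n^{k+1}}{k!}(p^2/2)^k$. The factor $2^k$ separating \ref{lempart: a} from \ref{lempart: b} is exactly this ordered-versus-unordered discrepancy.

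The error terms come from \emph{degenerate} configurations whose pairs share elements (forming chains under the shift $b \mapsto b+d$), so that fewer than $2k$ distinct elements appear. A configuration on $2k-j$ elements has probability $p^{2k-j}$, but the number of such configurations is smaller by a combinatorial constant times a factor of order $n^{-j}$, so relative to the main term each degenerate stratum is suppressed by a factor of order $(np)^{-j}$. Since $p = O(n^{-1/2})$ keeps $np^2$ bounded while $np \to \infty$ in the range where the conclusion is nonvacuous, these strata are lower order and the stated asymptotic for $\E[X_k]$ and $\E[Y_k]$ follows.

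For concentration I would run the second moment method: expand $\E[X_k^2]$ (and $\E[Y_k^2]$) as a double sum over ordered pairs of configurations, split according to whether the two configurations are element-disjoint. The disjoint part equals $(1+o(1))(\E[X_k])^2$, while pairs of configurations sharing at least one element again form lower-order strata by the same $(np)^{-1}$ counting, giving $\var{X_k} = o((\E[X_k])^2)$; Chebyshev's inequality then places $X_k$ in $((1-\epsilon)\E[X_k],\,(1+\epsilon)\E[X_k])$ with probability tending to $1$, which is precisely the claim, and the argument for $Y_k$ is identical. The main obstacle is the careful bookkeeping of the degenerate configurations in both moments — organizing them by the overlap structure of the pairs and checking uniformly that each overlap stratum carries the expected $(np)^{-1}$ suppression. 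A secondary point worth flagging is that the multiplicative concentration in the definition of $\sim$ requires the mean $\frac{n^{k+1}}{k!}(p^2/2)^k$ to tend to infinity, so one must restrict to (or observe that the later applications only use) the range of $p$ with $np \to \infty$; outside that range $X_k$ and $Y_k$ are typically too small for such a law-of-large-numbers statement.
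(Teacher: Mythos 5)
Your strategy coincides with the paper's: both compute the first moment by summing indicators over configurations (the $\lceil n/2\rceil$ unordered representations per sum value versus the $n$ ordered representations per difference value produce exactly your factor $2^{-k}$), both treat configurations with coincident elements as lower-order strata, and both obtain concentration from the second moment method, splitting the second moment according to whether the two configurations share an element. Your closing caveat is also well taken: the paper's variance bound $O(n^{2k+1}p^{4k-1}) = o(n^{2k+2}p^{4k})$ is valid only when $np \to \infty$, a hypothesis the paper never states but which your write-up makes explicit.

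There is, however, one step in your error analysis that fails as written, and it is worth flagging because the paper's own proof (which dismisses part \ref{lempart: b} with ``the exact same argument'') has the identical defect. You claim that every degenerate stratum of $Y_k$ is suppressed by a factor $(np)^{-j}$, where $2k-j$ is the number of distinct elements appearing. This is correct for the chain configurations attached to a \emph{nonzero} common difference, but not for the stratum $d = 0$: there every pair has the form $(a,a)$, a configuration is just a $k$-element subset of $\Z/n\Z$, and the stratum contributes $\binom{n}{k}p^k \sim n^k p^k / k!$. Relative to the main term $n^{k+1}p^{2k}/k!$ this is a factor $1/(np^k)$, not $(np)^{-k}$; since $p = O(n^{-1/2})$ forces $np^k = O(n^{1-k/2})$, the diagonal stratum is at least comparable to the main term when $k = 2$ and strictly dominates it when $k \geq 3$ (indeed, with the literal definition one has $Y_k \geq \binom{|A|}{k} \approx (np)^k/k!$). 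So part \ref{lempart: b} as stated is false for $k \geq 2$ unless $Y_k$ is defined to exclude the difference $0$, i.e., to require $a_i \neq b_i$ in each ordered pair; no such problem occurs for $X_k$, because each sum value admits only the single degenerate representation $\{a,a\}$, whose inclusion costs a factor of order $k/(np)$. The exclusion of $d=0$ is harmless for the intended application: in the inclusion--exclusion $D = \sum_k (-1)^{k+1} Y_k$, the $d = 0$ terms telescope to $\sum_{k \geq 1} (-1)^{k+1}\binom{|A|}{k} = 1$ (the difference $0$ lies in $A-A$ whenever $A \neq \emptyset$), so restricting every $Y_k$ to nonzero differences changes $D$ by exactly $1$, which is negligible. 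With that amendment, your stratified counting and the concentration argument built on it go through.
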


\begin{proof}
Each $k$-tuple that contributes to $X_k$ is one of two types: either all $2k$ elements are distinct, or one of the pairs is a repeated element.  Following the notation of \cite{HegartyMiller}, let $\xi_{1k}$, $\xi_{2k}$ be the number of tuples of the first type and second type, respectively.  Since every element of $A$ has $\lceil n/2 \rceil$ representations 
\hspace{-2mm} \footnote{Note that this is the fundamental difference between considering sums in the normal sense and considering sums mod $n$.  In the regular setting, the number of representations of $k$ as a sum depends on $k$, but in this setting it does not.  This difference is what causes the different constants in part (\ref{thmpart: CriticalDecay}) of Theorem \ref{thm: MainResult}.}
as the sum of two elements of $A$, we have
\begin{align} 
\xi_{1k} \ &= \ \sum_{r=0}^{n-1} \binom{\lceil n/2 \rceil}{k} \ = \ n \binom{\lceil n/2 \rceil}{k} \sim n \frac{(n/2)^k}{k!} \sim \frac{n^{k+1}}{2^k k!} \label{eq: TypeOne} \\
\xi_{2k} \ &= \  \sum_{r=0}^{n-1} \binom{\lceil n/2 \rceil}{k-1} \ = \ n \binom{\lceil n/2 \rceil}{k-1} \sim n \frac{(n/2)^{k-1}}{(k-1)!} \sim \frac{n^k}{2^{k-1} (k-1)!}. \label{eq: TypeTwo}
\end{align}
The expected value of $X_k$ is then given by
\begin{equation} \label{eq: RepeatedSums}
\E[X_k] \ = \ \xi_{1k} p^{2k} + \xi_{2k} p^{2k-1} \ = \ \frac{n^{k+1}}{2^k k!} p^{2k} + \frac{n^k}{2^{k-1} (k-1)!} p^{2k-1} \ \sim \ \frac{n^{k+1}}{k!} \left( \frac{p^2}{2} \right)^k.
\end{equation}
Now we show that the variance of $X_k$ is small enough to guarantee strong concentration about the mean.  It is sufficient to show that $\var{X_k} = o(\E[X_k]^2)$ (see, for example, chapter 4 of \cite{AlonSpencer}).  We have
\begin{equation}
\var{X_k} = \sum_{\alpha} \var{Y_{\alpha}} + \sum_{\alpha \neq \beta} \cov{Y_{\alpha},Y_{\beta}},
\end{equation}
where the sums are over $k$-tuples of unordered pairs of elements of $A$ and $Y_{\alpha}$ is an indicator variable that equals 1 if $\alpha$ contributes to $X_k$ and 0 otherwise.  From the arguments in \cite{AlonSpencer}, it is enough to show that
\begin{equation} \label{eq: JointProb}
\sum_{\alpha, \beta} \prob{\alpha, \beta \text{ both contribute to $X_k$}} = o(\E[X_k]^2),
\end{equation}
where the sum is now over all $\alpha$, $\beta$ that have at least one member in common.  The main contribution to this sum comes from pairs $\alpha$, $\beta$ with one element in common and $2k$ distinct elements each, and there are $O(n^{2k+1})$ choices for this (see the proof of Lemma 2.1 in \cite{HegartyMiller} for details).  Thus the sum (\ref{eq: JointProb}) is at most $O(n^{2k+1}p^{4k-1}) = o(n^{2k+2}p^{4k})$.  Thus part \ref{lempart: a} is proven.

The proof of part \ref{lempart: b} follows the exact same argument, so we omit the details.
\end{proof}

We can now prove parts (\ref{thmpart: FastDecay}) and (\ref{thmpart: CriticalDecay}) of Theorem \ref{thm: MainResult}.

\begin{proof}[Proof of Theorem \ref{thm: MainResult}, part (\ref{thmpart: FastDecay}).]
\text{} \\
If $p(n) = o(n^{-1/2})$, we have by Lemma \ref{lem: RepeatedSumsDiffs} that  $X_1 \sim \frac{1}{2} (n \cdot p(n))^2$, $Y_1 \sim (n \cdot p(n))^2$, $X_k = o(X_1)$, and $Y_k = o(Y_1)$ for $k \geq 2$.  In other words, all but a vanishing proportion of pairs of elements in $A$ have distinct sums and differences.  Thus $S \sim \frac{1}{2} (n \cdot p(n))^2$ and $D \sim (n \cdot p(n))^2$ as claimed.  This proves part (\ref{thmpart: FastDecay}).
\end{proof}

\begin{proof}[Proof of Theorem \ref{thm: MainResult}, part (\ref{thmpart: CriticalDecay})]
\text{} \\
By inclusion-exclusion, we have that
\begin{equation} \label{eq: InclusionExclusion}
S \ = \ \sum_{k=1}^{\infty} (-1)^{k+1} X_k.
\end{equation}
Lemma \ref{lem: RepeatedSumsDiffs} yields $X_k \sim n \frac{1}{k!} \left( \frac{c^2}{2} \right)^k$, so (\ref{eq: InclusionExclusion}) gives
\begin{equation}
S \ \sim \ n \cdot \sum_{k=1}^{\infty} \frac{(-1)^k}{k!} \left( \frac{c^2}{2} \right)^k \ = \ n(1-\exp(-c^2/2)),
\end{equation}
which was the claim.  Similarly, for differences we have
\begin{align}
D \ &= \ \sum_{k=1}^{\infty} (-1)^{k+1} Y_k \ \text{ and } \nonumber \\
Y_k \ &\sim \ n \frac{1}{k!} (c^2)^k,
\end{align}
so 
\begin{equation}
D \ \sim \ n \cdot \sum_{k=1}^{\infty} \frac{(-1)^k}{k!} (c^2)^k \ = \ n(1-\exp(-c^2)).
\end{equation}
This proves part (\ref{thmpart: CriticalDecay}).
\end{proof}

\section{Proof of main result: slow decay case}

We need the following bound.

\begin{restatable}{lem}{Lemma}
\label{lem: SeriesBound}
Suppose $p(n) = n^{-\delta}$ where $\delta \in (0,1/2)$.  Let 
\begin{equation}
F(n) \ = \ \sum_{r=0}^{n/2} \binom{n-r}{r} p^{r} (1-p)^{n-r}.
\end{equation}
Then $F(n) = o(1/n^3)$.
\end{restatable}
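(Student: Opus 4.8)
The plan is to recognize that $F(n)$ is \emph{almost}, but not quite, a sum of binomial probabilities, and that the discrepancy is exactly what drives the decay. If the coefficient were $\binom{n}{r}$ rather than $\binom{n-r}{r}$, the summand $\binom{n}{r}p^r(1-p)^{n-r}$ would be precisely $\prob{\mathrm{Bin}(n,p) = r}$; since this distribution peaks at $r \approx np = n^{1-\delta}$, which is $\ll n/2$, the truncated sum would be $\approx 1$ and hence useless. So the whole game is to exploit the gap between $\binom{n-r}{r}$ and $\binom{n}{r}$, which I will show contributes a factor decaying faster than any polynomial precisely because $\delta < 1/2$.

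First I would establish the pointwise bound $\binom{n-r}{r} \le \binom{n}{r}\exp(-r^2/n)$ for all $0 \le r \le n/2$. Writing both coefficients as products of $r$ consecutive factors gives the exact identity
\[
\frac{\binom{n-r}{r}}{\binom{n}{r}} \ = \ \prod_{j=0}^{r-1}\frac{n-r-j}{n-j} \ = \ \prod_{j=0}^{r-1}\left(1 - \frac{r}{n-j}\right),
\]
where each factor lies in $[0,1)$ when $r \le n/2$. Applying $1 - x \le \exp(-x)$ and then $\sum_{j=0}^{r-1}(n-j)^{-1} \ge r/n$ yields the claim. Consequently, writing $b_r := \binom{n}{r}p^r(1-p)^{n-r}$, we obtain $F(n) \le \sum_{r=0}^{n/2} b_r \exp(-r^2/n)$.

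Finally I would split this sum at the threshold $r_0 = np/2 = \tfrac12 n^{1-\delta}$, which lies safely below $n/2$ since $\delta > 0$. On the range $r \ge r_0$ I bound $\exp(-r^2/n) \le \exp(-r_0^2/n) = \exp(-n^{1-2\delta}/4)$ and use $\sum_r b_r = 1$; on the range $r < r_0$ I bound $\exp(-r^2/n) \le 1$ and control $\sum_{r < r_0} b_r = \prob{\mathrm{Bin}(n,p) < np/2}$ by the multiplicative Chernoff lower-tail bound, giving $\exp(-np/8) = \exp(-n^{1-\delta}/8)$. Hence
\[
F(n) \ \le \ \exp\!\left(-\tfrac18 n^{1-\delta}\right) + \exp\!\left(-\tfrac14 n^{1-2\delta}\right),
\]
and since $\delta \in (0,1/2)$ makes both exponents positive powers of $n$, each term decays faster than any polynomial, so in particular $F(n) = o(1/n^3)$.

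The main obstacle is conceptual rather than computational: one must notice that the naive bound $\binom{n-r}{r} \le \binom{n}{r}$ discards exactly the decay that makes the statement true (it only yields $F(n) \le 1$), and that the hypothesis $\delta < 1/2$ is precisely what forces $np^2 = n^{1-2\delta} \to \infty$, guaranteeing that the extracted factor $\exp(-r^2/n)$ overwhelms the near-full binomial mass. Once this factor is in hand, the remaining tail split is routine.
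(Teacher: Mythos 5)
Your proof is correct, but it follows a genuinely different route from the paper's. The paper first localizes the sum to the window $|r - np| < \log n \sqrt{np(1-p)}$ via Chebyshev, then bounds each summand by $2(1-p)^n \left( \frac{(n-r)ep}{r(1-p)} \right)^r$ (binomial approximation plus Stirling) and maximizes this bound by calculus, locating the maximizer near $r_0 \sim np$; this yields $F(n) \le 2n\left(e^p(1-p)\right)^n$, whose logarithm is $\log n - \tfrac12 np^2 + O(np^3) \to -\infty$ since $\log n = o(np^2)$. You instead prove the clean pointwise inequality $\binom{n-r}{r} \le \binom{n}{r} e^{-r^2/n}$ from the exact product identity, and then split at $r_0 = np/2$, handling the left piece with the Chernoff lower tail and the right piece with the extracted factor $e^{-r_0^2/n} = e^{-np^2/4}$. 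What your version buys: it avoids Stirling and the maximization step entirely (the paper's claims that $F(n)$ is asymptotic to its truncated sum and that the maximizer lies in the Chebyshev window are stated rather loosely, so your argument is arguably tighter logically), and it gives an explicit stretched-exponential bound $e^{-n^{1-\delta}/8} + e^{-n^{1-2\delta}/4}$, hence $F(n) = o(n^{-K})$ for every $K$, not just $K = 3$. What the paper's version buys: a sharper constant in the exponent ($e^{-np^2/2}$ versus your $e^{-np^2/4}$), i.e., it tracks the true size of $F(n)$ more closely, though that precision is not needed for the lemma. Both arguments ultimately rest on the same engine: $np^2 = n^{1-2\delta} \to \infty$ precisely because $\delta < 1/2$.
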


This is proven in Appendix \ref{app: SeriesBoundProof}.

To prove part \ref{thmpart: SlowDecay} of Theorem \ref{thm: MainResult}, we use the following strategy.  We let $S^c = n - |A+A|$ be the number of sums missing from $A+A$, and we show that 
\begin{equation}
\lim_{n \to \infty} \E[S^c] \ = \ \lim_{n \to \infty} \var{S^c} \ = \ 0.
\end{equation}
To show that this is sufficient, let $v(n) = \var{S^c}$ and let $s(n) = \sqrt{v(n)}$.  By Chebyshev's inequality
\begin{equation}
\prob{|S^c - \E[S^c]| \ \geq \ k s(n)} \ \leq \ \frac{1}{k^2}.
\label{eq: chebyshev1}
\end{equation}
Taking $k = 1/\sqrt{s(n)}$, we see that 
\begin{equation}
\prob{|S^c - \E[S^c]| \ \geq \ \sqrt{s(n)}} \leq s(n).
\label{eq: chebyshev2}
\end{equation}
Thus, since $\E[S^c]$ also tends to $0$, we can say that $\prob{S^c > 1/2} \to 0$ as $n \to \infty$; thus $S \sim n$.  We also use this argument for differences by replacing $S^c$ everywhere with $D^c := n - |A-A|$.  We can now prove part (\ref{thmpart: SlowDecay}) of Theorem \ref{thm: MainResult}.

\begin{proof}[Proof of Theorem \ref{thm: MainResult}, part (\ref{thmpart: sumslow}).]
\text{} \\
Let $S^c = n - |A+A|$.  First we compute $\E[S^c]$.  Define the random variables $Z_k$ by
\begin{equation}
Z_k \ := \ \begin{cases} 
1, & k \not\in A+A \\ 
0, & k \in A+A 
\end{cases}
\end{equation}
so that $\sum_{k \in \Z/n\Z} Z_k = S^c$. 

Since $n$ is assumed to be a large prime and is therefore odd, each $k \in \Z/n\Z$ can be written as a sum in $(n+1)/2$ different ways, and all of the representations are independent of each other, so $\prob{k \not\in A+A} = \E[Z_k] = (1-p^2)^{(n+1)/2}$.  Thus we have

\begin{equation}
\label{eq: ExpectedMissingSums}
\E[S^c] \ = \ \sum_{k \in \Z/n\Z} \E[Z_k] \ = \ n(1-p^2)^{(n+1)/2} \ \sim \ n(1-p^2)^{n/2}.
\end{equation}
Denote this quantity by $G(n)$.  To show that it tends to 0, we have
\begin{align}
\log G(n) \ &= \ \log n + \frac{1}{2}n \log (1-p^2) \nonumber \\
&= \ \log n + \frac{1}{2}n (-p^2 + O(p^4)) \nonumber \\ 
&= \ \log n - \frac{1}{2}np^2 + O(np^4),
\end{align}
which tends to $-\infty$ as $n \to \infty$ because $\log n = o(np^2)$; thus $G(n)$ tends to 0.

\begin{rek}
\label{rek: EScRemark}
If instead we had $p(n) = o(\sqrt {\log n} \cdot n^{-1/2})$, then $\log G(n)$ would tend to $+ \infty$ rather than $- \infty$.
\end{rek}

We now compute $\var{S^c}$.  We have
\begin{align}
\var{S^c} \ &= \ \sum_{k \in \Z/n\Z} \var{Z_k} + \sum_{i \neq j \in \Z/n\Z} \cov{Z_i, Z_j} \nonumber \\
\ &= \ \sum_{k} \left( \E[Z_k^2] - \E[Z_k]^2 \right) + \sum_{i \neq j} \left( \E[Z_i Z_j] - \E[Z_i] \E[Z_j] \right) \nonumber \\
\ &\sim \ \sum_{k} \left( (1-p^2)^{n/2} - (1-p^2)^n \right) \nonumber \\
\ &\quad + \ \sum_{i \neq j} \left( \prob{i \not\in A+A \ \wedge \ j \not\in A+A} - (1-p^2)^n \right) \nonumber \\
\ &\sim \ n(1-p^2)^{n/2} - n^2(1-p^2)^n + \sum_{i \neq j} \prob{i \not\in A+A \ \wedge \ j \not\in A+A}.
\end{align}

We can get an expression for the probability that $i$ and $j$ are both missing from the sumset by translating the problem into graph theory.  Define the graph $G_{n,i,j}^S$ as follows.  The vertices of $G_{n,i,j}^S$ are the elements $\{0, \hdots, n-1\}$, and vertices $a$ and $b$ are connected by an edge if and only if $a+b \equiv i \pmod{n}$ or $a+b \equiv j \pmod{n}$ (see Figure \ref{fig: SumGraph}).

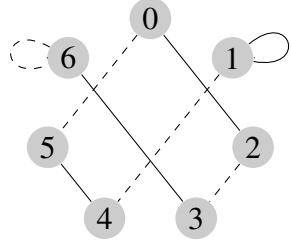
\begin{figure}[h]
\begin{tikzpicture}
  [scale=.7,auto=left,every node/.style={circle,fill=black!20,minimum size=12pt,inner sep=2pt}]
  \node (n0) at (0,2)       {0};
  \node (n1) at (1.56,1.25)   {1};
  \node (n2) at (1.95,-0.45)       {2};
  \node (n3) at (0.87,-1.8)  {3};
  \node (n4) at (-0.87,-1.8)      {4};
  \node (n5) at (-1.95,-0.45) {5};
  \node (n6) at (-1.56,1.25)      {6};

  \foreach \from/\to in {n0/n2,n1/n1,n3/n6,n4/n5}
    \draw (\from) -- (\to);
  \draw (n1) to [out=400,in=350,looseness=8] (n1);
  \foreach \from/\to in {n0/n5,n1/n4,n2/n3,n6/n6}
    \draw [dash pattern = on 3 pt off 3 pt] (\from) -- (\to);
  \draw [dash pattern = on 3 pt off 3 pt] (n6) to [out=150,in=200,looseness=8] (n6);
\end{tikzpicture}

\caption{The graph $G_{7,2,5}^S$.  For clarity, each edge is solid or dotted depending on the sum of the two incident vertices, but this doesn't affect the graph.}

\label{fig: SumGraph}
\end{figure}

The event $(i \not\in A+A \ \wedge \ j \not\in A+A)$ corresponds to the event that no two adjacent vertices of $G_{n,i,j}^S$ are in $A$.  Since we have assumed $n$ is prime, we know that for any $i,j$, $G_{n,i,j}^S$ is isomorphic to a path of $n$ vertices with a loop on each endpoint (see Figure \ref{fig: UntangledSumGraph}).  

\begin{figure}[h]

\begin{tikzpicture}
  [scale=.7,auto=left,every node/.style={circle,fill=black!20,minimum size=12pt,inner sep=2pt}]
  \node (n6) at (0,2)       {6};
  \node (n1) at (1.56,1.25)   {1};
  \node (n4) at (1.95,-0.45)       {4};
  \node (n5) at (0.87,-1.8)  {5};
  \node (n0) at (-0.87,-1.8)      {0};
  \node (n2) at (-1.95,-0.45) {2};
  \node (n3) at (-1.56,1.25)      {3};

  \foreach \from/\to in {n0/n2,n1/n1,n3/n6,n4/n5}
    \draw (\from) -- (\to);
  \draw (n1) to [out=400,in=350,looseness=8] (n1);
  \foreach \from/\to in {n0/n5,n1/n4,n2/n3,n6/n6}
    \draw [dash pattern = on 3 pt off 3 pt] (\from) -- (\to);
  \draw [dash pattern = on 3 pt off 3 pt] (n6) to [out=50,in=100,looseness=8] (n6);
\end{tikzpicture}

\caption{The graph $G_{7,2,5}^S$ from Figure \ref{fig: SumGraph} rearranged to illustrate the structure.  The graph $G_{n,i,j}^S$ has this structure for any $n,i,j$.}
\label{fig: UntangledSumGraph}
\end{figure}
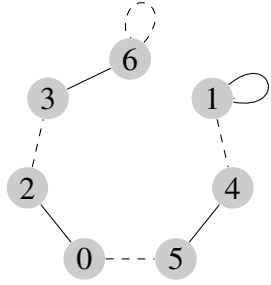

We see that $A$ can't contain either of the two endpoints (6 and 1 in the figure).  So, after a relabeling of the vertices,  picking a set $A$ so that $i$ and $j$ are both missing from $A+A$ is equivalent to picking a subset of $\{1, \hdots, n-2\}$ with no two consecutive elements (1 and $n-2$ are not considered consecutive). Since we are picking elements of $A$ independently with probability $p$, the probability of picking $A$ with no two consecutive elements is
\begin{equation}
\sum_{r=0}^{(n-2)/2} C(n-2,r) p^r (1-p)^{(n-2)-r},
\label{eq: SumVar}
\end{equation}
where $C(n-2,r)$ denotes the number of $r$-element subsets of $\{1, \hdots, n-2\}$ with no consecutive elements.  By a simple counting argument (see the calculation of quantity $Y$ in Appendix \ref{app: LucasNumbers}), we have $C(n-2,r) = \binom{n-2-r+1}{r} $.

\begin{rek}
The numbers $C(n-2,r)$ also have another combinatorial interpretation.  Any positive integer can be written uniquely as a sum of non-adjacent Fibonacci summands; these numbers are how many integers at most $F_{n-1}-1$ have exactly $r$ summands. This partition of the integers in $[0, F_{n-1}-1)$ was used in \cite{KKMW} to show that the distribution of the number of summands converges to a Gaussian as $n\to\infty$.
\end{rek}

Since the probability that neither of the endpoints gets picked is $(1-p)^2$, we have that
\begin{align}
\prob{i \not\in A+A \ \wedge \ j \not\in A+A} \ &= \ (1-p)^2 \sum_{r=0}^{(n-2)/2} \binom{n-2-r+1}{r} p^r (1-p)^{(n-2)-r} \nonumber \\
&\leq \ \sum_{r=0}^{n/2} \binom{n-r}{r} p^r (1-p)^{n-r}. \label{eq: JointSumProb}
\end{align}
Recall that (\ref{eq: JointSumProb}) is the quantity $F(n)$ from Lemma \ref{lem: SeriesBound}.  So we have
\begin{align}
\var{S^c} \ &\leq \ n(1-p^2)^{n/2} - n^2(1-p^2)^n + \sum_{i \neq j} F(n) \nonumber \\
\ &\leq \ n(1-p^2)^{n/2} - n^2(1-p^2)^n + n^2 F(n).
\end{align}

The first term is $\E[S^c]$, which tends to 0.  The second term is $\E[S^c]^2$, which also tends to 0.  The third term tends to 0 by Lemma \ref{lem: SeriesBound}, so $\var{S^c}$ tends to 0 as $n \to \infty$.  This completes the proof that $S \sim n$.
\end{proof}

\begin{proof}[Proof of Theorem \ref{thm: MainResult}, part (\ref{thmpart: diffslow}).]
\text{} \\
We let $D^c := n- |A-A|$, so $D^c$ denotes the number of differences missing from $A-A$. We will compute $\E[D^c]$ and $\var{D^c}$ and show that 
\begin{equation}
\lim_{n\to\infty} \E[D^c] \ = \ \lim_{n\to\infty} \var{D^c} \ = \ 0.
\end{equation}
Replacing all instances of $S^c$ with $D^c$ in (\ref{eq: chebyshev1}) and (\ref{eq: chebyshev2}), this implies that $D \sim n$.

To find $\E[D^c]$, we must find $P(k \notin A - A)$ for every $k \in \Z/n\Z$. First, we assume that $A \neq \emptyset$, because this happens with negligible probability since we are in the slow decay case. Because $A \neq \emptyset$, we only consider $k \neq 0$. Having fixed $k$, there are $n$ different pairs $(a,b)$ such that $a - b \equiv k \mod n$: $(k,0), (2k, k), \dots, ((n-1)k, (n-2)k), (0, (n-1)k)$.

The pairs are all ordered because subtraction isn't commutative. Then $k \notin A-A$ if and only if 
\begin{equation} 
(0 \notin A \vee k \notin A) \wedge (k \notin A \vee 2k \notin A) \wedge \dots \wedge ((n-1)k \notin A \vee 0 \notin A). 
\label{eq: DiffInNotin}
\end{equation} 
Similarly to the previous section, this lends itself to a natural graph-theoretic interpretation. We construct the graph $G_{n, k}$ with vertex set $V = \{0, 1, \dots, n-1\}$ and with edge set $E = \{\{0, k\}, \dots, \{(n-1)k, 0 \}\}$. In other words, we draw an edge between all vertices $a$ and $b$ such that $a-b \equiv k\mod n$ or $b-a \equiv k\mod n$. Then an equivalent formulation of (\ref{eq: DiffInNotin}) is that $k\notin A-A$ if and only if no two adjacent vertices of $G_{n,k}$ are in $A$.

Because we assume $n$ is prime and $k \not\equiv 0$ $($mod $n)$, all of $0, k, 2k, \dots, (n-1)k$ are distinct mod $n$, so $G_{n,k}$ is necessarily a cycle on $n$ vertices (see Figure \ref{fig: DiffGraph} for an example).

\begin{figure}[h]

\begin{tikzpicture}
  [scale=.7,auto=left,every node/.style={circle,fill=black!20,minimum size=12pt,inner sep=2pt}]
  \node (n0) at (0,2)       {0};
  \node (n2) at (1.56,1.25)   {2};
  \node (n4) at (1.95,-0.45)       {4};
  \node (n6) at (0.87,-1.8)  {6};
  \node (n1) at (-0.87,-1.8)      {1};
  \node (n3) at (-1.95,-0.45) {3};
  \node (n5) at (-1.56,1.25)      {5};

  \foreach \from/\to in {n0/n2,n2/n4,n4/n6,n6/n1, n1/n3, n3/n5, n5/n0}
    \draw (\from) -- (\to);
\end{tikzpicture}

\caption{The graph $G_{7,2}$.}
\label{fig: DiffGraph}
\end{figure}
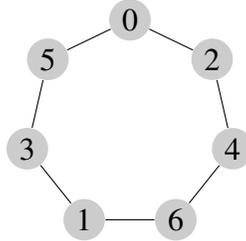

\noindent If we re-label each vertex $ak$ as $a$, then picking an $A \subseteq \Z/n\Z$ such that $k \notin A-A$ is equivalent to picking a subset of $\{0, 1, \dots, n-1\}$ such that no two consecutive elements are picked, where $0$ and $n-1$ are considered to be consecutive. By the calculation of the quantity $D(n,k)$ in Theorem \ref{thm:lucasid} from Appendix B, there are ${\binom{n-r+1}{r}} - {\binom{n-r-1}{r-2}}$ ways to choose such an $r$-element subset of $\{0,1, \dots, n-1\}$. We have then that 
\begin{equation}
P(k \notin A-A) \ = \ \sum_{r=1}^{\left \lfloor{n/2}\right\rfloor} \bigg[\binom{n-r+1}{r} - \binom{n-r-1}{r-2} \bigg] p^{r}(1-p)^{n-r}.
\label{eq:NotinDiffProb}
\end{equation}
We start the summation at $r =  1$ because we have assumed $A \neq \emptyset$. We sum until $r = \left\lfloor{n/2}\right\rfloor$ because ${\binom{n-r+1}{r}} - {\binom{n-r-1}{r-2}}$ is zero for all bigger $r$.

\begin{rek}
Here is where we rely heavily on the assumption that $n$ is prime.  If $n$ is not prime, then the graph $G_{n,k}$ becomes a union of disjoint cycles of length $n / \gcd(n,k)$, and so $\prob{k \notin A-A}$ becomes
\begin{equation}
\left( \sum_{r=1}^{\lfloor n/(2d(k)) \rfloor} \left[ \binom{n/d(k) - r + 1}{r} - \binom{n/d(k) - r - 1}{r-2} \right] p^r (1-p)^{n/d(k) - r} \right)^{d(k)},
\end{equation}
where $d(k) = \gcd(n,k)$.  Simulations suggest that as $n \to \infty$, this quantity is independent of $d(k)$, but the analysis becomes significantly more involved.
\end{rek}

We have then that
\begin{align}
\E[D^c] \ &= \ \sum_{k \in (\Z/n\Z)\setminus\{\bar{0}\}} P(x \notin A-A) \nonumber \\
&= \ (n-1)\sum_{r=1}^{\left \lfloor{n/2}\right\rfloor} \bigg[{{n-r+1} \choose {r}} - {{n-r-1} \choose {r-2}} \bigg] p^{r}(1-p)^{n-r} \nonumber \\
 &\leq \ n\sum_{r=0}^{{n/2}} \bigg[{{n-r+1} \choose {r}} - {{n-r-1} \choose {r-2}} \bigg] p^{r}(1-p)^{n-r} \nonumber \\
&= \ n\sum_{r=0}^{{n/2}} \bigg[\frac{(n-r-1)!(n^2-2nr+n)}{r!(n-2r+1)!}\bigg]p^r(1-p)^{n-r} \nonumber \\
&= \ n\sum_{r=0}^{{n/2}} \bigg[\frac{n(n-r)!}{r!(n-2r)!(n-r)}\bigg]p^r(1-p)^{n-r} \nonumber \\
&= \ n\sum_{r=0}^{{n/2}}{{n-r} \choose r} \frac{n}{n-r}p^r(1-p)^{n-r} \nonumber \\
&\leq \ 2n\sum_{r=0}^{{n/2}} {{n-r} \choose r}p^r(1-p)^{n-r} \ = \ 2nF(n), \label{eq: SimplifiedDc1}
\end{align}
and this quantity tends to 0 by Lemma \ref{lem: SeriesBound}.


We compute $\var{D^c}$ in a similar manner as $\var{S^c}$.   Define the random variables
\begin{equation}
Z'_k \ := \ \begin{cases} 
1, & k \not\in A-A \\ 
0, & k \in A-A. 
\end{cases}
\end{equation}
We have
\begin{align}
\var{D^c} \ &= \ \sum_{k \in \Z/n\Z} \var{Z'_k} + \sum_{i \neq j \in \Z/n\Z} \cov{Z'_i, Z'_j} \nonumber \\
\ &\sim \ \sum_{k \neq 0} \left( \E[(Z'_k)^2] - \E[Z'_k]^2 \right) + \sum_{i \neq j } \left( \E[Z'_i Z'_j] - \E[Z'_i] \E[Z'_j] \right) \nonumber \\
\ &\sim \ nF(n) - n^2F(n)^2 + \sum_{i \neq j} \prob{i \not\in A-A \wedge j \not\in A-A}.
\end{align}
Now note that in particular, $\prob{i \not\in A-A \wedge j \not\in A-A} \leq \prob{i \not\in A-A}$, so we have the bound
\begin{align}
\var{D^c} \ &\leq \ nF(n) - (nF(n))^2 + n(n-1)F(n) \nonumber \\
\ &\leq \ n^2F(n) - (nF(n))^2,
\end{align}
which tends to 0 by Lemma \ref{lem: SeriesBound}.  This completes the proof of part (\ref{thmpart: SlowDecay}) of Theorem \ref{thm: MainResult}.
\end{proof}


\appendix

\section{Proof of Lemma \ref{lem: SeriesBound}} 
\label{app: SeriesBoundProof}
\Lemma*
\begin{proof}
We use the following well-known approximations.

\begin{itemize}

\item Binomial approximation: if $X$ and $Y$ are two quantities depending on $n$ where $1 = o(X)$ and $Y = o(X)$, then
\begin{equation}
\binom{X}{Y} \sim \frac{X^Y}{Y!}.
\end{equation}

\item Stirling's formula: 
\begin{equation}
n! \sim \sqrt{2 \pi n} \left( \frac{n}{e} \right)^n
\end{equation}

\end{itemize}

With these at our disposal, we can prove Lemma \ref{lem: SeriesBound}.  Note that for any $r$,
\begin{equation}
\label{eq: BinomialBound}
\binom{n-r}{r} p^r (1-p)^{n-r} \ \leq \ \binom{n}{r} p^r (1-p)^{n-r}.
\end{equation}
Since the binomial distribution has mean $np$ and variance $np(1-p)$, we have by Chebyshev's inequality
\begin{equation}
\sum_{|r - np| \ \geq \ \log n \sqrt{np(1-p)}} \binom{n}{r} p^r (1-p)^{n-r} \ = \ o \left( \sum_{|r - np| \ < \ \log n \sqrt{np(1-p)}} \binom{n}{r} p^r (1-p)^{n-r} \right).
\end{equation}
What this is saying is that the tails of the distribution are negligible compared to the middle.  Thus, by (\ref{eq: BinomialBound}), we can write
\begin{equation}
\label{eq: MiddleDominates}
F(n) \ = \ \sum_{r=0}^{n/2} \binom{n-r}{r} p^r (1-p)^{n-r} \ \sim  \ \sum_{|r - np| \ < \ \log n \sqrt{np(1-p)}} \binom{n-r}{r} p^r (1-p)^{n-r}.
\end{equation}
What this means is that all but a negligible amount of the contribution to $F(n)$ comes from the terms where $r$ is close to $np$.

We have
\begin{align}
\binom{n-r}{r} p^r (1-p)^{n-r} \ &\leq \ (1-p)^n \frac{(n-r)^r}{r!} \left( \frac{p}{1-p} \right)^r \label{eq: FnBinom} \\
&\leq \ 2(1-p)^n \frac{(n-r)^r}{\sqrt{2\pi r} (r/e)^r} \left( \frac{p}{1-p} \right)^r \label{eq: FnStirling} \\
&\leq \ 2(1-p)^n \left( \frac{(n-r)ep}{r(1-p)} \right)^r. \label{eq: Gr}
\end{align}
The inequality in (\ref{eq: FnBinom}) comes from the binomial approximation and the inequality in (\ref{eq: FnStirling}) comes from Stirling's formula.  Denote the quantity on the right side of (\ref{eq: Gr}) by $g(r)$.  We now maximize $\log g(r)$ over $r \in [0, n/2]$. 
\begin{align}
\frac{g'(r)}{g(r)} \ &= \ \log \left( \frac{(n-r)pe}{r(1-p)} \right) - \frac{n}{n-r}.
\end{align}
It is clear that $g(r)$ is small at the endpoints; thus $g(r)$ is maximized at $r = r_0$ such that 
\begin{equation}
\log \left( \frac{(n-r_0)ep}{r_0(1-p)} \right) \ = \ \frac{n}{n-r_0}.
\end{equation}
We know by (\ref{eq: MiddleDominates}) that $r_0$ must satisfy $|r_0 - np| \ < \ \log n \sqrt{np(1-p)}$, so we have
\begin{equation}
\log \left( \frac{(n-r_0)ep}{r_0(1-p)} \right) \ \sim \ 1;
\end{equation}
thus $r_0 \sim np$.  Letting $r = np$ in (\ref{eq: Gr}), we now have the bound
\begin{align}
\binom{n-r}{r} p^r (1-p)^{n-r} \ &\leq \ 2(1-p)^n \left( \frac{(n-np)pe}{np(1-p)} \right)^{np} \nonumber \\
&\leq \ 2(1-p)^n e^{np} \nonumber \\ 
&= \ 2(e^p - pe^p)^n,
\end{align}
so that
\begin{equation}
F(n) \ \leq \ 2n(e^p - pe^p)^n.
\end{equation}
To complete the proof, it suffices to show that $h(n) := 2n^4(e^p - pe^p)^n \to 0$ as $n \to \infty$.  We have
\begin{align}
\log h(n) \ &= \ \log 2 + 4 \log n + n \log (e^p(1-p)) \nonumber \\
&= \ \log n + n \log(1-p) + np \nonumber \\
&= \ \log n + n( -p - \frac{1}{2} p^2 + O(p^3) ) + np \nonumber \\
&= \ \log n - \frac{1}{2} np^2 + O(np^3)
\end{align}
and this tends to $-\infty$ as $n \to \infty$ because $\log n = o(np^2)$; thus $h(n)$ tends to 0.  This completes the proof of Lemma \ref{lem: SeriesBound}.
\end{proof}

\begin{rek}
As in Remark \ref{rek: EScRemark}, if $p(n) = o(\sqrt{ \log n} \cdot n^{-1/2} )$, then $\log h(n)$ tends to $+\infty$ rather than $-\infty$.
\end{rek}

\section{Note on Lucas numbers}
\label{app: LucasNumbers}

The Lucas numbers are defined by the recurrence $$L_n = L_{n-1} + L_{n-2}$$ with initial values $L_0 = 2$ and $L_1 = 1$. Combinatorially, the $n$-th Lucas number represents the number of subsets of $\{1, \dots, n\}$ containing no consecutive integers, where $1$ and $n$ are counted as consecutive (see \cite{honsberger} for a proof of this).  An equivalent formulation of the following formula appears on page 173 of \cite{Koshy}, but we use a different counting argument to establish it directly.  We prove the following:

\begin{thm} \label{thm:lucasid}
For all $n \geq 2$, 
$$
L_n \ = \ \sum_{k=0}^{\lfloor n/2 \rfloor} \left[ \binom{n-k+1}{k} - \binom{n-k-1}{k-2} \right].
$$
\end{thm}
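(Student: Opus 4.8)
The plan is to prove the identity combinatorially. As recalled above, $L_n$ counts the subsets of $\{1,\dots,n\}$ with no two cyclically consecutive elements (where $1$ and $n$ count as consecutive). Writing $D(n,k)$ for the number of such subsets of size exactly $k$, it therefore suffices to establish
\[
D(n,k) \;=\; \binom{n-k+1}{k} - \binom{n-k-1}{k-2}
\]
and then sum over $k$, noting that $D(n,k)=0$ once $k > \lfloor n/2\rfloor$ since an $n$-cycle has no independent set larger than $\lfloor n/2 \rfloor$. Thus $L_n = \sum_{k=0}^{\lfloor n/2\rfloor} D(n,k)$ will give the claimed formula.

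First I would count the linear analogue. Let $Y(m,k)$ be the number of $k$-subsets of $\{1,\dots,m\}$ with no two \emph{linearly} consecutive elements. Listing a chosen set as $x_1 < \cdots < x_k$ and setting $y_i = x_i - (i-1)$ gives a bijection onto the $k$-subsets $y_1 < \cdots < y_k$ of $\{1,\dots,m-k+1\}$, so $Y(m,k) = \binom{m-k+1}{k}$; this is the quantity $Y$ used in \eqref{eq: JointSumProb}.

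Next I would pass from the line to the cycle by inclusion--exclusion on the lone extra adjacency $\{n,1\}$. A subset is cyclically non-consecutive exactly when it is linearly non-consecutive \emph{and} does not contain both $1$ and $n$, so $D(n,k) = Y(n,k) - B(n,k)$, where $B(n,k)$ counts linearly non-consecutive $k$-subsets that contain both $1$ and $n$. In such a subset the elements $2$ and $n-1$ are forbidden, and the remaining $k-2$ elements form a linearly non-consecutive subset of the block $\{3,\dots,n-2\}$ of $n-4$ integers; hence $B(n,k) = Y(n-4,k-2) = \binom{n-k-1}{k-2}$. Substituting yields the target formula for $D(n,k)$ immediately in the desired form, with no further algebra.

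I expect the only real obstacle to be boundary bookkeeping rather than substance: I must check that the formula survives small $n$ and extreme $k$, where the block $\{3,\dots,n-2\}$ degenerates (it is empty for $n\le 4$, and the $2$-cycle $n=2$ carries a doubled edge) and where terms must vanish under the convention $\binom{a}{b}=0$ for $b<0$ or $b>a$. Verifying $n=2,3,4$ directly confirms these conventions behave as needed. As a cross-check --- and the route matching the simplification carried out in \eqref{eq: SimplifiedDc1} --- conditioning instead on whether $n$ lies in the subset gives $D(n,k) = \binom{n-k}{k} + \binom{n-k-1}{k-1} = \frac{n}{n-k}\binom{n-k}{k}$, which one verifies algebraically to equal $\binom{n-k+1}{k} - \binom{n-k-1}{k-2}$.
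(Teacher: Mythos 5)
Your proposal is correct and follows essentially the same route as the paper: both decompose the cyclic count as $D(n,k) = Y - Z$, where $Y = \binom{n-k+1}{k}$ counts linearly non-consecutive $k$-subsets (your shift bijection and the paper's stars-and-bars are the same standard argument) and $Z = \binom{n-k-1}{k-2}$ counts those containing both $1$ and $n$, then sum over $k \le \lfloor n/2 \rfloor$. The closed-form cross-check $\frac{n}{n-k}\binom{n-k}{k}$ is a nice addition but not part of the paper's argument.
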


\begin{proof}
Let $D(n,k)$ denote the number of $k$-element subsets of $\{1, \hdots, n\}$ containing no two consecutive integers, where $1$ and $n$ are considered consecutive.  Note that for any $k > n/2$, the pigeonhole principle forces $D(n,k)=0$.  Thus
\begin{equation} \label{Dnk}
\sum_{k=0}^{\lfloor n/2 \rfloor} D(n,k) \ = \ L_n,
\end{equation}
and we just need to show $D(n,k) = \binom{n-k+1}{k} - \binom{n-k-1}{k-2}$.  For fixed $n,k$, let 
\begin{align*}
Y &= \text{\# acceptable subsets without considering $1,n$ consecutive} \\
Z &= \text{\# subsets that contain both $1$ and $n$ but no other consecutive integers} 
\end{align*}
and note that $D(n,k) = Y-Z$. Note also that $Y = C(n,k)$ from (\ref{eq: SumVar}).

To count $Y$, we use a standard stars-and-bars argument.  Suppose you have $n$ objects in a row, and you need to select $k$ of them, no two of which are consecutive.  Remove $k$ of the objects.  You now need to reinsert the $k$ objects into the row such that no two are consecutive, which means you have $n-k+1$ spots to choose from (one spot in between each remaining pair of objects and one on each end of the row).  Thus the number of ways to pick $k$ non-consecutive elements from a row of $n$ is $\binom{n-k+1}{k}$.  

Now note that to count $Z$, we just repeat the argument for $Y$, but this time we are picking $k-2$ non-consecutive elements from $\{3, \hdots, n-2\}$, and there are $\binom{(n-4)-(k-2)+1}{k-2} = \binom{n-k-1}{k-2}$.  So $D(n,k) = \binom{n-k+1}{k} - \binom{n-k-1}{k-2}$.  
\end{proof}



\vspace{1cm}


\begin{thebibliography}{99999999999}


\bibitem[AS]{AlonSpencer}
\newblock N. Alon and J.H. Spencer, \emph{The Probabilistic Method}, Wiley, 1992.

\bibitem[DKMMWW]{Polytopes}
\newblock T. Do, A. Kulkarni, S.J. Miller, D. Moon, J. Wellens, and J. Wilcox, ``Sets characterized by missing sums and differences in dilating polytopes'', \emph{Journal of Number Theory}, \text{157} (2015), 123-153.

\bibitem[HM]{HegartyMiller}
\newblock P. Hegarty and S.J. Miller, ``When almost all sets are difference dominated", \emph{Random Structures and Algorithms}, \textbf{35} (2009), no. 1, 118-136. 

\bibitem[Ho]{honsberger}
\newblock Honsberger, R. \emph{A Second Look at the Fibonacci and Lucas Numbers.} Ch. 8 in Mathematical Gems III. Washington, DC: Math. Assoc. Amer., 1985.

\bibitem[KKMW]{KKMW} 
\newblock M. Kolo$\breve{{\rm g}}$lu, G. Kopp, S. J. Miller and Y. Wang,  ``On the number of summands in Zeckendorf decompositions'', \emph{Fibonacci Quarterly} \textbf{49} (2011), no. 2, 116-130.

\bibitem[Ko]{Koshy}
\newblock Koshy, T. \emph{Fibonacci and Lucas Numbers with Applications}.  New York: Wiley, 2001.

\bibitem[LMO]{LazarevMillerOBryant}
\newblock O. Lazarev, S.J. Miller, and K. O'Bryant, ``Distribution of Missing Sums in Sumsets", \emph{Experimental Mathematics}, \textbf{22} (2013), no. 2, 132-156. 

\bibitem[MO]{MartinOBryant}
\newblock G. Martin and K. O'Bryant, ``Many sets have more sums than differences'' (2006), arXiv: math.NT/0608131.

\bibitem[MV]{MillerVissuet}
\newblock S.J. Miller and K. Vissuet, ``Most subsets are balanced in finite groups'', \emph{Combinatorial and Additive Number Theory: CANT 2011 and 2012 (Springer Proceedings in Mathematics \& Statistics)} (2014), 147-157.

\bibitem[Na]{Nathanson1}
\newblock M. Nathanson, ``Problems in Additive Number Theory, 1" (2006),  arXiv: math.NT/0406340.

\bibitem[Zh]{Zhao1}
\newblock Y. Zhao, ``Sets characterized by missing sums and differences'', \emph{Journal of Number Theory}, \textbf{131} (2011), no. 11, 2107-2134.

\end{thebibliography}
\end{document}